
%

\documentclass[amscd,amssymb,12pt]{amsart}  

\usepackage{graphicx}
\usepackage{tikz}
\usepackage{lipsum}
\usepackage{amsthm}
\usepackage{amsmath}
\usepackage{amssymb}
\usepackage{mathtools}
\usepackage{enumitem}
\usepackage{blkarray}
\usepackage[utf8]{inputenc}

\newcommand{\D}[1]{{\mathbb#1}} 

\theoremstyle{plain}

\newtheorem{lem}{Lemma}[section]

\newtheorem{conj}{Conjecture}[section]

\theoremstyle{definition}
\newtheorem{defin}{Definition}[section]

\theoremstyle{definition}
\newtheorem{example}{Example}[section]

\theoremstyle{remark}
\newtheorem{rem}{Remark}[section]


\setlength{\oddsidemargin}{0.25in}
\setlength{\evensidemargin}{0.25in}

\setlength{\textwidth}{6in}

\setlength{\topmargin}{0.25in}

\setlength{\textheight}{8in}

\footskip = 40pt

\usepackage{fancyhdr}
\pagestyle{fancy}
\lhead{Ionescu and Sumitro}
\rhead{Periods}

\title{Periods and Applications} 
\author{Lucian M. Ionescu}
\author{Richard Sumitro}
\address{Mathematics Department, Illinois State University\\
Normal IL 61704-4520, USA}
\email{lmiones@ilstu.edu, rsumitr@ilstu.edu} 
\date{August 2017} 

\begin{document}

\begin{abstract}
Periods are numbers represented as integrals of rational functions over algebraic domains. 
A survey of their elementary properties is provided. 
Examples of periods includes Feynman Integrals from Quantum Physics and 
Multiple Zeta Values from Number Theory.  

But what about finite characteristic, via the global-to-local principle?
We include some considerations regarding periods and Jacobi sums,  
the analog of Veneziano amplitudes in String Theory.
\end{abstract}

\keywords{Algebraic numbers, periods, cohomology, Feynman integrals, 
multiple zeta values, Veneziano amplitudes, Jacobi sums}
\subjclass[2000]{26B15,81Q30(primary),14F40,11R42,11Lxx(secondary)} 

\maketitle
\setcounter{tocdepth}{3} 
\tableofcontents

\section{Introduction}
Periods form a new and important class of numbers extending the traditional class of algebraic numbers.
They appeared in mathematicians computations, notably in \cite{Zagier-MZV}, 
and their importance was immediately recognized to be properly introduced by Kontsevich \cite{K97,K99}, 
and later studied together with Zagier \cite{KZ}\footnote{We apologize if we got some priorities wrong.}.
A quick example is $\pi$, the area of the unit disk; a conjectural non-example is $e$.

Periods are ubiquitous in mathematics, showing up as Feynman Integrals in quantum physics, 
as well as values of Multiple Zeta Functions (MZV) in Number Theory.
There is a fast growing interest to understand and organize this new area of study 
\cite{Yves,Brown,Schnetz,Waldschmidt} etc..

In this article we review the main properties of periods, with examples coming from calculus, 
as volumes, from physics as Feynman integrals,
and from number theory, as multiple zeta values. 
Identities between periods are non-trivial to prove, and Kontsevich's conjecture is recalled.  
A few proofs of admissible identities are provided, or attempted. 

Finally, a new point as far as we know, the we attempt to relate periods with Jacobi sums, 
which are the finite characteristic analog of amplitudes of string interactions.
Recall that these so called Veneziano amplitudes from String Theory,
correspondent to the Feynman Amplitudes of particle interactions in Quantum Field Theory.

The main goal in this paper is to make the new subject of periods more widely known, especially amongst graduate students.
We thus avoid advanced topics related to motives \cite{Yves}, which we ourselves hardly understand!
A second objective is to emphasize the connection between (quantum physics and mathematics, 
especially Number Theory, which should not be surprising when reminded that {\em quantum} means {\em discrete}!

The article is organized as follows.
We start by presenting periods, with examples and stating their elementary properties, 
following Zagier and Kontsevich \cite{KZ}.
Next, we survey two instances of periods, 
showing the tight connection between Physics and Mathematics, 
especially with Analytic Number Theory:
Feynman integrals and their yet to be understood connection with Multiple Zeta Values \cite{Schnetz,QuantaMagazine},
and second, Veneziano amplitudes, which are the String Theory analog of the former, 
and Jacobi sums, with deep roots in Algebraic Number Theory / Geometry,
awaiting to be further explored \cite{LI:IHES-Project,LI:P-FI-JS}.
In conclusion, we summarize the main aspects presented, and plan for further developments.

\vspace{.1in}
This article is based on the research project of the second author, 
focusing on periods and Feynman integrals \cite{RS:ISU-report}, 
with further contributions from the second author regarding the second application.

\section{What are {\em periods?}}
Recall the ``traditional'' number systems $\{1,2,3, many\}$ (Neanderthal man?), $N\subset Z\subset Q\subset R \subset C$,
with their ``raison d'etre'' (motivation): to extend the number system allowing to perform arithmetic operations or to solve algebraic equations. The ``real numbers'', as an ``overkill'' in this direction, introduced a lot of ``junk'' (unrealistic numbers like 
Liouville's constant etc. \cite{Real-fish}). 
The algebraic numbers, i.e. solutions of polynomial equations with rational coefficients, are (reasonably) well understood by now, 
in spite of Pythagorean's denial of their existence. 
We also got used with the {\em transcendental numbers}, i. e. irrational numbers, but not algebraic, 
with infinitely many ``random'' digits in their decimal representations (never-mind Cauchy here!), 
without questioning their meaning (``They mathematically exist, therefore we don't doubt!'', a sort of Descartes' converse ...). 
But not all transcendental numbers are created equal! The ubiquitous $\pi$, more of a geometric symbol or trademark,
stands above all ... as well as $e\approx 2.1417...$, reigning in the ``Analysis Kingdom''.
Are there more?
Note that Cantor has proven that $\bar{\mathbb{Q}}$ is countable and transcendental numbers are uncountable, 
so there should be plenty ... but are they ''useful''?  And how different are they, what other ``types'' of numbers we 
may want to distinguish?
\begin{rem}
Regarding the amount of information contained in such a transcendental number,
a ``generic'' one contains and infinitely amount of information in its decimal representation, and we cannot provide it; 
our examples make use of a finite description, e.g. for Liouville's number, $e$, $\pi$ etc.
Entropy (amount of info), could be a criterion for the complexity of such a transcendental number,
yet in this article, a more algebraic-geometric aspect is emphasized.
\end{rem}

Recently, one more important class of numbers {\em in Mathematics} was identified in a systematic and coherent way 
\cite{Zagier-MZV,K97,K99,KZ}\footnote{Physicists probably new their importance before, from Physics!}, 
which lay in between $\bar{\mathbb{Q}}$ and $\mathbb{C}$, called \textbf{periods}. 
These numbers appear as rational integrals, and form a countable class, extending the class of algebraic numbers. 
Moreover, the periods occurring in applications as such integrals, are usually transcendental numbers,
and not algebraic.
They contain only a finite amount of information, 
and it was conjectured that they can be identified in an algorithmic way. 
Periods appear surprisingly often in various formulas and conjectures in mathematics, 
and often provide a {\em bridge between problems coming from different disciplines}.  

\subsection{Definition and Properties}\label{S:Examples}
\begin{defin}\label{D:Period}
A {\em period} $P$ is a complex number whose real and imaginary parts are values of 
absolutely convergent integrals of {\em rational functions} with rational coefficients, 
over {\em rational domains} in $\mathbb{R}^n$, i.e. given by polynomial inequalities with rational coefficients.
\end{defin}
Notice that by definition, since we only consider rational functions to define $P$, we see that $P$ is countable since rationals are countable. 

Ex. $\pi$ is a period. Several possible {\em representations} as an integral follow:
\begin{eqnarray*}
\pi = \int \int_{x^2+y^2\leq 1} 1 dxdy = 2\int_{-1}^1 \sqrt{1-x^2}dx = \int_{-1}^1\frac{1}{\sqrt{1-x^2}}dx = \int_{-\infty}^{\infty} \frac{1}{1+x^2}dx
\end{eqnarray*}

Ex. $\sqrt{2}$ is a period:
\begin{eqnarray*}
\sqrt{2}= \int_{2x^2\leq 1} 1 dx
\end{eqnarray*}

Ex. Any logarithm of an integer, such as $\log(3)$ is a period
\begin{eqnarray*}
\log(3) = \int_1^3 \frac{1}{x}dx 
\end{eqnarray*}

Ex. For all natural numbers $k>1$, the values of the \textit{Riemann zeta function}
\begin{eqnarray*}
\zeta(k) = \Sigma_{n\geq 1} \frac{1}{n^k}
\end{eqnarray*}
are periods (see the following Lemma), 
as are also the {\em multiple zeta values} (MZV) at integer arguments $k_i \in \mathbb{N}, k_i\geq 2$:
\begin{eqnarray*}
\zeta(k_1,\cdots ,k_l) = \Sigma_{0<n_1<\cdots < n_l} \frac{1}{n_1^{k_1}\cdots n_l^{k_l}}.
\end{eqnarray*}

\begin{lem}
For $k \in \D{N}, k \geq 2$, the number 
\begin{eqnarray*}
\zeta (k) = \Sigma_{n \geq 1} \frac{1}{n^k}
\end{eqnarray*}
is a period.
\end{lem}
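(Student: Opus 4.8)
The plan is to exhibit $\zeta(k)$ explicitly as an absolutely convergent integral of a rational function with rational coefficients over a rational domain, matching Definition~\ref{D:Period}; the hypothesis $k\ge 2$ should enter exactly at the step where absolute convergence is verified. First I would recall the elementary identity $\int_0^1 t^{n-1}\,dt = 1/n$, which by Fubini on the unit $k$-cube gives, for every integer $n\ge 1$,
\[
\frac{1}{n^k} \;=\; \int_{[0,1]^k} (x_1 x_2\cdots x_k)^{n-1}\,dx_1\cdots dx_k .
\]
Summing over $n\ge 1$, interchanging sum and integral (legitimate by Tonelli, since all integrands are nonnegative on $[0,1]^k$), and using $\sum_{n\ge 1} y^{n-1} = 1/(1-y)$ for $0\le y<1$ — hence for almost every point of the cube — I get the candidate representation
\[
\zeta(k) \;=\; \int_{[0,1]^k} \frac{dx_1\, dx_2\cdots dx_k}{1 - x_1 x_2\cdots x_k}.
\]

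Next I would check that the right-hand side has the required shape: the integrand is a rational function with integer coefficients, and the cube $[0,1]^k$ is the solution set of the polynomial inequalities $x_i\ge 0$ and $x_i\le 1$ with integer coefficients, hence a rational domain. It remains only to confirm absolute convergence. Here the integrand is nonnegative, so absolute convergence is simply convergence, and its sole singular locus inside the cube is the hypersurface $\{x_1\cdots x_k=1\}$, which meets $[0,1]^k$ only at the vertex $(1,\dots,1)$; writing $x_i=1-u_i$ there gives $1-x_1\cdots x_k = u_1+\cdots+u_k+O(|u|^2)$, so the local behaviour is that of $\int_{[0,\varepsilon]^k}(u_1+\cdots+u_k)^{-1}\,du$, which converges precisely when $k\ge 2$. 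Even more directly, the Tonelli interchange above already yields an identity in $[0,+\infty]$ between the integral and $\sum_{n\ge 1}n^{-k}$, so finiteness of the former is literally convergence of the $\zeta$-series, i.e.\ $k\ge 2$. This convergence check is the only real obstacle, and it is a mild one: there is no deep content, only the bookkeeping that the integrand and domain qualify as rational in the technical sense.

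Finally, I would add the remark that iterating the one-variable identity instead of factorizing it produces the Kontsevich--Zagier iterated-integral form
\[
\zeta(k) \;=\; \int_{0<t_1<t_2<\cdots<t_k<1} \frac{dt_1}{1-t_1}\cdot\frac{dt_2}{t_2}\cdots\frac{dt_k}{t_k},
\]
which represents $\zeta(k)$ as a period over a rational simplex and is the shape that generalizes directly to the multiple zeta values $\zeta(k_1,\dots,k_l)$ mentioned just above.
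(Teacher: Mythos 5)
Your proof is correct, and it is more complete than the one in the paper: the paper's entire proof consists of writing down the iterated ("Chen") integral
\[
\zeta(k)=\int_{1>t_1>\cdots>t_k>0}\frac{dt_1}{t_1}\cdots\frac{dt_{k-1}}{t_{k-1}}\frac{dt_k}{1-t_k}
\]
with no verification of the identity and no convergence discussion. You instead take as your primary representation the cube integral $\int_{[0,1]^k}\frac{dx_1\cdots dx_k}{1-x_1\cdots x_k}$, and you actually justify it: the factorized identity $1/n^k=\int_{[0,1]^k}(x_1\cdots x_k)^{n-1}\,dx$, the Tonelli interchange (which, as you note, simultaneously delivers absolute convergence for $k\ge 2$ because the integral equals the convergent series in $[0,+\infty]$), and the check that the integrand and domain are rational in the sense of the definition. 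The two representations are equivalent under the rational change of variables $t_j=x_j x_{j+1}\cdots x_k$, and you recover the paper's simplex form in your closing remark, so nothing is lost; what your route buys is an honest proof of the asserted identity and an explicit localization of where $k\ge 2$ is used (the $(1,\dots,1)$ corner analysis is also correct: $1-\prod(1-u_i)\sim u_1+\cdots+u_k$, and the homogeneity degree $-1$ against a $k$-dimensional region converges exactly for $k\ge 2$). What the paper's bare simplex form buys is immediacy and the direct generalization to $\zeta(k_1,\dots,k_l)$, which you also point out. No gaps.
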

\begin{proof}
Note that 
\begin{eqnarray*}
\zeta(k) = \int_{1>t_1>\cdots > t_k > 0} \frac{dt_1}{t_1}\cdots \frac{dt_{k-1}}{t_{k-1}}\frac{dt_k}{1-t_k}
\end{eqnarray*} 
\end{proof}

For example, when $k=2$:
\begin{eqnarray*}
\zeta(2) = \int\limits_{0<x<y<1} \frac{dx}{1-x}\frac{dy}{y}
\end{eqnarray*}
The arithmetic nature of the values of the Riemann zeta function at positive even integers is known since Euler:
\begin{eqnarray*}
\pi^{-2k}\zeta(2k) \in \mathbb{Q} 
\end{eqnarray*}
for $k \geq 1$. 
These rational numbers involve Bernoulli numbers. 
Since Lindermann's theorem asserts that the number $\pi$ is trascendental, so is $\zeta(2k)$ for any integer $k \geq 1$.  

A more complex example of period is provided by the logarithmic Mahler measure of a Laurent polynomial \cite{KZ}, p.5:
$$\mu(P)=\idotsint\limits_{|x_1=1| , ... , |x_n|=1}  \log |P(x_1,...,x_n)|   dx_1 \dots dx_n.$$

\vspace{.2in}
But what about other ``interesting'' transcendental numbers, which are {\em not} periods?
Interestingly, $e$ is transcendental, but conjecturally not a period, unlike $\pi$ which is both. 
Another example of non-period numbers is the {\em Liouville number} $L$, which is an irrational number, since it has the 
rational approximation property that for every positive integer $n$, there exist integers $p$ and $q$ with $q>1$ and such that 
\begin{equation*}
0<\vert x -\frac{p}{q} \vert < \frac{1}{q^n}
\end{equation*}  
but probably not a period.
Another famous ``mathematical constant'', the Euler-Mascheroni constant:
$$\gamma = \lim_{n \rightarrow \infty} (1+\frac{1}{2}+\cdots + \frac{1}{n}-log(n))=
\int_1^\infty \left(  \frac1{[ x ]} -\frac1{x} dx   \right)$$ 
is (conjecturally) not a period, although it ``looks'' like one, if it weren't for the 
arithmetic {\em floor} function $[x]$.

\begin{rem}
The ``problem'' in this last example, is ``mixing'' algebraic functions and domain, with a step function, 
but which is still algebraic: the ``other projection'', from $R$ to $Z$,
of the arithmetic projection modulo $1$: $Z\to R\to R/Z$\footnote{... reminiscent of a chain homotopy: $[x]+fr(x)=x$.}.
Are these more ``general'' periods?
\end{rem}

Note again, that these numbers are only {\em conjecturally} classified as not periods;
there may be a integral that can represent such numbers to classify as one, but not found yet.


\subsection{Identities between periods}
On one hand the set $P$ is countable and each element of it can be described by a finite amount of information (namely, the integrand and domain of integration defining the period). 
On the other hand, a \textit{priori}, there are many ways to write a complex number as integrals, which yield the same value, the period.
The problem is exacerbated by the fact that two different periods may be numerically close and yet be distinct. 
For example, consider the following two periods \cite{KZ}, p.6: 
$$ \frac{\pi \sqrt{163}}{3} \qquad \text{and} \qquad \log(640320),$$
in which both have decimal expansions beginning 13.36972333037750... 
(computed by SAGE\footnote{How can one find such examples!? ... generating a ``lattice'', using $\pi$ and $\log$, maybe?}). 
Even more amazingly, the two periods 
$$\frac{\pi}{6}\sqrt{3502}\qquad \text{and} \qquad  \log(2\prod_{j=1}^4(x_j+\sqrt{x_j^2-1})),$$
where $x_1 = \frac{1071}{2}+92\sqrt{34}, x_2 = \frac{1553}{2}+133\sqrt{34}, x_3 =429+304\sqrt{2}, x_4 = \frac{627}{2}+221\sqrt{2}$, which agree numerically to more than 80 decimal digits and nevertheless are different\footnote{... dito! See \cite{Shanks-23}.}.

For {\em algebraic numbers}, there may also be apparently different expressions for the {\em same number}, 
such as \cite{Shanks-22}
\begin{equation*}
\sqrt{11+2\sqrt{29}}+\sqrt{16-2\sqrt{29}+2\sqrt{55}-10\sqrt{29}} = \sqrt{5}+\sqrt{22}+2\sqrt{5},
\end{equation*} 
but we can check their equality easily, either by finding some polynomial satisfied by each number and computing the g.c.d. of these polynomials or else by calculating both numbers numerically to sufficiently high precision and using the fact that 
{\em two different solutions of algebraic equations with integer coefficients of given degree and height \cite{Silverman-height},
cannot be too close to each other}. 

Now the question is: ``Can we do something similar for periods?''. 
From calculus, we have several transformation rules between integrals, i.e. ways to prove identities between integrals. 
For integrals of functions in one variable there rules are as follows:
%
\begin{enumerate}

\item {\bf Additivity} (Domain and integrand):
\begin{eqnarray*}
\int_a^b(f(x)+g(x))dx &=& \int_a^b f(x)dx + \int_a^b g(x)dx\\
\int_a^b f(x)dx &=& \int_a^c f(x)dx + \int_c^b f(x)dx
\end{eqnarray*}

\item {\bf Change of variables}: 
if $y=f(x)$ is an invertible change of variables, then 
\begin{eqnarray*}
\int_{f(a)}^{f(b)} F(y) dy = \int F(f(x))f'(x)dx
\end{eqnarray*}

\item {\bf Newton-Leibniz formula}:
\begin{eqnarray*}
\int_a^b f'(x)dx = f(b) - f(a)
\end{eqnarray*}

\end{enumerate}
In the case of multi-dimensional integrals, one puts the Jacobian of an invertible change of coordinates in Rule 2,
and replaces the Newton-Leibniz formula by Stoke's formula in Rule 3. 

\subsection{Kontsevich's Conjecture}
Now, are these rules ``enough''? 
From observation (and much more: \cite{KZ}, p.7), there is a widely-held belief which can be stated as follows:
\begin{conj}
If a period has two integral representations, then one can pass from one formula to another using only rules (1)-(3) in which all functions and domains of integration are algebraic with coefficients in $\bar{\mathbb{Q}} $.
\end{conj}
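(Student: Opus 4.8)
\medskip
\noindent\textbf{Proof proposal.}
This conjecture is open, so what follows is a \emph{program} rather than a complete argument; I indicate the expected line of attack and where the genuine difficulty lies. The plan is to translate both the data (a period together with an integral representation of it) and the moves (rules (1)--(3)) into cohomological language, and then to recognize the resulting statement as an injectivity assertion which is, up to technical refinements, Grothendieck's period conjecture.

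First I would set up a dictionary. After applying resolution of singularities to compactify, any absolutely convergent integral $\int_\sigma \omega$ of an algebraic form over an algebraic domain, everything defined over $\bar{\mathbb Q}$, can be written as a period pairing $\langle [\omega],[\sigma]\rangle$ between a class $[\omega]$ in algebraic de Rham cohomology $H^n_{\mathrm{dR}}(X,D)$ and a class $[\sigma]$ in singular homology $H_n^{\mathrm B}(X(\mathbb C),D(\mathbb C);\mathbb Q)$, where $(X,D)$ is a smooth $\bar{\mathbb Q}$-variety relative to a normal crossings divisor. Under this dictionary Rule (1) becomes bilinearity of the pairing, Rule (2) becomes its functoriality under morphisms of pairs $(X,D)$, and Rule (3) (Stokes, Newton--Leibniz) expresses that exact forms annihilate cycles and that the pairing is compatible with the connecting maps in the long exact sequences of a pair. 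The first key step is then a lemma asserting that this correspondence is exact in both directions: every application of (1)--(3) is such a cohomological move, and --- the harder converse --- every cohomological identity of this type can be realized by a finite chain of applications of (1)--(3).

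Granting the dictionary, I would introduce, following Kontsevich--Zagier and Nori, the $\bar{\mathbb Q}$-vector space $\mathcal P^{\mathrm{formal}}$ spanned by formal symbols $[X,D,\omega,\sigma]$ modulo \emph{exactly} the relations imposed by (1)--(3), together with the natural period homomorphism $\mathrm{per}\colon\mathcal P^{\mathrm{formal}}\to\mathbb C$. By construction, Kontsevich's conjecture is precisely the statement that $\mathrm{per}$ is injective. One then identifies $\mathcal P^{\mathrm{formal}}$ with the ring of motivic periods of mixed motives over $\bar{\mathbb Q}$, whose spectrum is a torsor under the motivic Galois group; in this form the conjecture becomes equivalent, up to well-understood technicalities \cite{KZ}, to Grothendieck's period conjecture --- that the transcendence degree of the period field of a motive equals the dimension of its motivic Galois group. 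Here I would invoke Ayoub's theorem as the current state of the art: the relative analogue, in which the period and its representations vary algebraically in a one-parameter family, \emph{has} been proved, and this disposes of the purely geometric half of the converse in the dictionary lemma.

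The remaining --- and decisive --- step is injectivity of $\mathrm{per}$ over the arithmetic base $\mathrm{Spec}\,\bar{\mathbb Q}$, and this is where I expect the real obstacle to sit: it is a transcendence statement of the same depth as Grothendieck's conjecture, established only in very limited cases (e.g. for the motive of $\mathbb G_m$ by the classical transcendence of logarithms, and for linear relations among periods of $1$-motives by Wüstholz's analytic subgroup theorem), with no general method in sight. The honest outcome of this program is therefore a conditional theorem: assuming Grothendieck's period conjecture --- equivalently, injectivity of the motivic-to-complex period map --- the dictionary lemma upgrades it to Kontsevich's conjecture exactly as stated; removing that hypothesis would require a fundamentally new idea in transcendence theory.
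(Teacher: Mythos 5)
The statement you were asked to prove is Kontsevich's conjecture, which the paper presents precisely as an open conjecture and makes no attempt to prove; there is no proof in the paper to compare against, only the illustrative accessible identity $\zeta(2)=\pi^2/6$ worked out afterwards. Your proposal correctly recognizes the statement as open and, rather than claiming a proof, gives an accurate account of the standard conditional program in the literature: formalize rules (1)--(3) as the defining relations of a formal period algebra, identify that algebra with motivic periods, and reduce injectivity of the period homomorphism to Grothendieck's period conjecture, with Ayoub's theorem handling the relative (functional) version. This goes well beyond anything the paper contains and is essentially the right picture.

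One caveat is worth flagging. The ``dictionary lemma'' in your second paragraph --- that every cohomological identity between period pairings can be realized by a \emph{finite chain} of applications of the elementary calculus rules (1)--(3) as literally stated in the paper --- is not something you can simply grant: in the rigorous formulations the relations of the formal period algebra are taken to be the cohomological ones (bilinearity, functoriality, Stokes) by definition, and the equivalence of that presentation with the naive additivity/change-of-variables/Newton--Leibniz moves on honest absolutely convergent integrals is itself a nontrivial comparison (resolution of singularities, semi-algebraic triangulation, passage to normal crossings compactifications). So even the ``purely geometric half'' of your program is not entirely disposed of by citing Ayoub; it requires its own argument. With that understood, your conclusion is the correct one: the conjecture as stated is equivalent, modulo such technical comparisons, to a transcendence statement of Grothendieck type, and no unconditional proof is in sight.
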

In other words, {\em there is no coincidence} 
\footnote{A. Einstein: ``Coincidence is God's way of remaining anonymous''.},
that two integrals of algebraic functions are equal
 and such equalities can be proven by utilizing only rule (1)-(3). 
 Whenever we have expressed the identity to be proved as an equality between two periods
 {\em and assume that Conjecture holds true}, 
 we call such equality as \textbf{accessible identity}


\subsection{Examples of Identities}
Examples of such identities involving the zeta values are due to Euler.
\begin{example} 
Consider the following equality, proved by Euler in 1734: 
$$\zeta(2) = \frac{\pi^2}{6}.$$ 
Since $\zeta(2)$ and $\pi$ are periods, we see that given equality is an accessible identity. 
But is there a proof using {\em only} Rules 1-3?
Such a proof was given by Calabi \cite{Calabi}, and adapted in \cite{KZ}
(we provide some additional details).
\end{example}
\begin{proof}
Set 
\begin{equation*}
    I = \int_0^1 \int_0^1 \frac{1}{1-xy} \frac{\,dx \,dy}{\sqrt{xy}} 
\end{equation*}
Expanding $1/(1-xy)$ as a geometric series and integrating by terms, we find that 
\begin{equation*}
I = \Sigma_{n=0}^{\infty} (n+\frac{1}{2})^{-2} = (4-1)\zeta(2)
\end{equation*}
in which we found a period representation of $\zeta(2)$. Now, making the change of variables, 
\begin{eqnarray*}
x &=& \eta^2 \frac{1+\xi^2}{1+\eta^2}\\
y &=& \xi^2 \frac{1+\eta^2}{1+\xi^2}
\end{eqnarray*}
with using Jacobian $\vert \frac{d(x,y)}{d(\eta, \xi} \vert = \frac{4\eta\xi(1-\eta^2\xi^2)}{(1+\eta^2)(1+\xi^2)} = 4\frac{(1-xy)\sqrt{xy}}{(1+\eta^2)(1+\xi^2)}$, we find that 
\begin{equation}
I = 4 \iint_{\eta, \xi > 0, \, \eta \xi \leq 1} \frac{\,d\eta}{1+\eta^2}\frac{\,d\xi}{1+\xi^2} 
\end{equation}
Now consider the involution $w: (\eta, \xi) \mapsto (\eta^{-1}, \xi^{-1})$. Note that $\eta \xi \leq 1$ implies that $ \eta^{-1} \xi^{-1} >1$, so that together the union of the two regions is, essentially, the first quadrant. Then 
\begin{eqnarray*}
2 \iint_{\eta, \xi > 0, \, \eta \xi \leq 1} \frac{\,d\eta}{1+\eta^2}\frac{\,d\xi}{1+\xi^2} &=& \iint_{\eta, \xi > 0, \, \eta \xi \leq 1} \frac{\,d\eta}{1+\eta^2}\frac{\,d\xi}{1+\xi^2}+\iint_{\eta, \xi > 0, \, \eta \xi > 1} \frac{\,d\eta}{1+\eta^2}\frac{\,d\xi}{1+\xi^2}
\\&=&\iint_{\eta, \xi > 0, \, \eta \xi \leq 1} \frac{\,d\eta}{1+\eta^2}\frac{\,d\xi}{1+\xi^2} +\iint_{\eta, \xi > 0, \, \eta \xi > 1} \frac{\,d\eta}{1+\eta^2}\frac{\,d\xi}{1+\xi^2}
\\
&=& \iint_{\mathbb{R}^+\times \mathbb{R}^+} \frac{\,d\eta}{1+\eta^2}\frac{\,d\xi}{1+\xi^2}
\end{eqnarray*}
So, by utilizing Fubini's theorem, we see that 
\begin{equation*}
2 \iint_{\eta, \xi > 0, \, \eta \xi \leq 1} \frac{\,d\eta}{1+\eta^2}\frac{\,d\xi}{1+\xi^2} = \int_0^{\infty}\frac{\,d\eta}{1+\eta^2}\int_0^{\infty} \frac{\,d\xi}{1+\xi^2} 
\end{equation*}
Thus, from Eq (1), we see that
\begin{equation*}
   I = 2 \int_0^{\infty}\frac{\,d\eta}{1+\eta^2}\int_0^{\infty} \frac{\,d\xi}{1+\xi^2} 
\end{equation*}

and consider the integral form of $\pi = \int_{-\infty}^{\infty} \frac{1}{1+x^2}\,dx$, we see that $I = \pi^2/2$.
Thus, we obtain the equality $3 \zeta(2) = I = \pi^2/2$. which proves the equality as required.
\end{proof}

As another example, we have the following accessible identity \cite{KZ}, involving the Mahler measure $\mu(P)$ from 
section \S\ref{S:Examples}:
\begin{eqnarray*}
\mu (x+y+16+\frac{1}{x}+\frac{1}{y} = \frac{11}{6}\mu (x+y+5+\frac{1}{x}+\frac{1}{y}).
\end{eqnarray*}
As an exercise (suggested in \cite{KZ}, p.9), try proving it using only the rules $1 - 3$;
the present authors tried, and ended up comparing some hypergeometric functions,
to be addressed elsewhere \cite{IS:MahlerMeasure}.


\section{Feynman Integrals and Multiple Zeta Values}
The Feynman integral has many usages in both physics and mathematical field. 
It is the modern approach to Quantum Field theory. 
More interestingly, there are many connection between periods and Feynman integrals.

\subsection{Feynman Graphs and Feynman Integrals}
Historically, Feynman integrals were mainly used in the perturbative approach to QFT,
starting from a Lagrangean formulation, with a free part controlling the motion of the fields and an interaction term,
determining the interactions between fields.

Following \cite{phi4}, we consider massless euclidean $\phi^4$-theory in 4 dimensions with 
Lagrangean interaction term normalized to 
\begin{eqnarray*}
L_{int} = \frac{16\pi^2 g}{4!}\int_{\mathbb{R}^4}d^4x\phi(x)^4,
\end{eqnarray*}
but without the physics details involved.
We focus on the 4-point-function and obtain for the scattering amplitude of a Feynman-graph $\Gamma$ (for examples, see Fig. 1). 
\begin{align}
A_{\gamma} = (2\pi)^4\delta^4(q_1+q_2+q_3+q_4)\frac{16\pi^2 g}{\vert q_1 \vert ^2 \cdots \vert q_4 \vert ^2}\cdot(\frac{g}{\pi^2})^{h_1}\int_{\mathbb{R}^{4h_1}}d^4p_1\cdots d^4p_{h_1} \frac{1}{\Pi_{i=1}^n Q_i (p,q)}
\end{align}
where the momentum-conserving 4-dimensional $\delta$-function $\delta^4$ with 'external' moments $q_1,\cdots , q_4$, 
the ``loop order'' $h_1$ giving the number of independent cycles in $\Gamma$.  

The integral diverges logarithmically for large $p_i$. 
Since for large $p_i$ the value of the external momenta becomes irrelevant we may nullify the $q_i$ 
to characterize the divergence by a mere number (if it exists) given by the projective integral
\begin{eqnarray}
P_{\Gamma} = \pi^{-2h_1}\int_{\mathbb{P}\mathbb{R}^{4h_1-1}}\frac{\Omega(p)}{\Pi_{i=1}^{n} Q_i (p,0)}
\end{eqnarray} 
Here, we have introduced the projective volume measure which is defined in $\mathbb{P}^m$ with coordinates $x_0,\cdots, x_m$ as 
\begin{eqnarray*}
\Omega(x) = \Sigma_{i=0}^{m} (-1)^idx_0 \cdots \hat{dx_i} \cdots dx_m
\end{eqnarray*}
We assume an orientation of $\mathbb{P}\mathbb{R}^{4h_1-1}$  is chosen such that $P_{\gamma}>0$. 

In the following, we consider the differential form $\Omega(p)/\Pi Q_i(p,0)$ in Eq(3) as degree 0 meromorphic $4h_1-1$ form in complex projective space $\mathbb{PC}^{4h_1=1}$. It is of top degree as meromorphic form and hence closed in the complement of $\Pi Q_i(p,0)=0$. As odd dimensional real projective space the domain of integration is orientable and compact without boundary and thus a cycle in $\mathbb{PC}^{4h_1-1}$. However, the cycle of integration meets the singularities of the differential form which in general leads to an ill-defined integral. To ensure that the integral converges we need an extra condition on the graph $\Gamma$. 
We define a graph $\Gamma$ is \textbf{primitive} if it has $n(\Gamma) = 2h_1(\Gamma)$ edges and every proper subgraph $\gamma < \Gamma$ has $n(\gamma) > 2h_1(\gamma)$. Amazingly, we see that the period $P_{\Gamma}$, Eq (3), is well-defined if and only if $\Gamma$ is primitive. A proof is given in Proposition 5.2 from \cite{Bloch}.

\vspace{.1in}
In this sense, $\pi^{2h_1}P_{\Gamma}$ becomes a period in $\lbrace p \in \mathbb{PC}^{4h_1-1} : \Pi Q_i(p,0)\neq 0\rbrace$. 
In any case, this parametric representation of the Feynman integral 
makes $P_{\Gamma}$ an algebraic period in the sense of Kontsevitch and Zagier. 

One finds these periods in all sorts of perturbative calculations (like the beta-function or the anomalous dimension) within the QFT considered. In fact, the role that periods play in Hopf-algebra of renormalization \cite{Connes}, 
suggests that there might exist a clever renormalization scheme such that they form a complete 
$\mathbb{Q}$-base for the coefficients of the perturbative expansion of scalar functions. 
This gives periods a prominent role within QFT.   

\begin{example}
Consider the two Feynman graphs from Fig.1:
\begin{center}
\includegraphics[scale =.8]{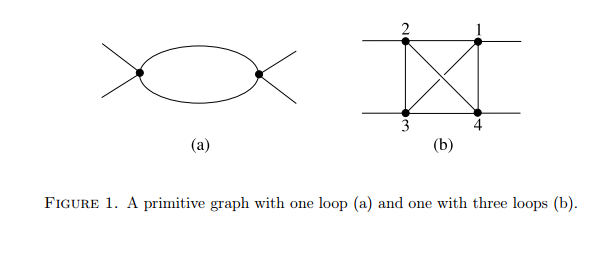}
\end{center}
Regarding the graph plotted in Fig. 1(a), the corresponding period is:
\begin{eqnarray*}
P_1 &=& \pi^{-2} \int_{\mathbb{P}\mathbb{R}^3} \frac{\Omega(p)}{\vert p \vert ^2 \vert p \vert ^2}\\
&=& \pi^{-2} \int_{\mathbb{R}^2} \frac{d^3 \textbf{p}}{(\textbf{p}^2+1)^2}\\
&=&\pi^{-2}4\pi \int_0^{\infty} \frac{p^2}{(p^2+1)^2}dp\\
&=& 1
\end{eqnarray*}
In the second line, we used $p=(1,\textbf{p})$ to make the integral affine,
and in the third line, we introduced polar coordinates to transform the integral to a standard one-dimensional integral. 
Note that graph $1 (a)$ is so far the only one evaluating to a $\phi^4$ period known to be a rational number. 
Most likely, it is the only rational $\phi^4$-period. 
\end{example}

\subsection{Feynman Rules}
To see how Mathematics enters the Physics of QFT, we will briefly sketch the algorithmic nature of Feynman integrals associated
to Feynman graphs.

{\em Feynman rules} are prescriptions allowing to translate a Feynman-graph $\Gamma$ into an analytical expression, an improper integral representing the scattering amplitude $A_{\Gamma}$. 
In our setup - primitive 4-point functions without external momenta in massless 4-dimensional $\phi^4$-theory - these expressions evaluate to positive numbers. 

There are four different ways to use Feynman rules, depending on the type of variables used: 

\quad A) {\em Position and momentum space}, where integrands are products of inverted quadrics and the variables are 4 dimensional vectors assigned to vertices and cycles, respectively; 

\quad B) Alternatively, we may use Feynman's {\em parametric space} either in its original form or in a {\em dual version}, with variables attached to edges of the graph. 

Although the transition from position or momentum space to parametric space is due to Feynman, it is known in the mathematical literature as ``Schwinger-trick''.  
To avoid confusion, we stick to this name in the following diagram that exemplifies the interconnection between the different approaches. Below are the propagators for a massless bosonic quantum field theory:
\begin{center}
\includegraphics[scale=1]{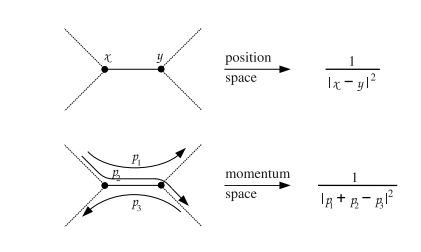}
\includegraphics[scale=1]{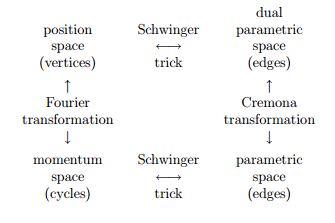}

Figure 2. Feynman rules and Schwinger's ``trick'' \cite{phi4}, p.7.
\end{center}
In position space, every edge joining vertices with variables $x,y \in \mathbb{R}^4$ contributes by a factor $1/\vert x-y \vert ^2$ to the Feynman integrand (as seen in the graph above). 
In momentum space, every edge contributes by a factor $1/ \vert \Sigma \pm p_i \vert ^2$ with variables $p_i \in \mathbb{R}^4$ associated to  a basis of cycles of the graph $P_i$, that run through the edge in one ($+$ sign) or opposite ($-$ sign) direction. 
The integration ranges over the whole real space. 

As a uniform approach, independent on the number of incoming/outgoing momenta, 
every 4-point graph in $\phi^4$-theory can be uniquely completed to a 4-regular graph 
by attaching one extra vertex to the external edges. 
For example, the completions of the graphs in Fig. 1 (a) and Fig. 1 (b) from Fig. 3 (\cite{phi4}, p.9),
yield the periods $P_1$ and $P_3$ 
in the census table \cite{phi4}, \S3, p.28.

\begin{center}
\includegraphics[scale=0.9]{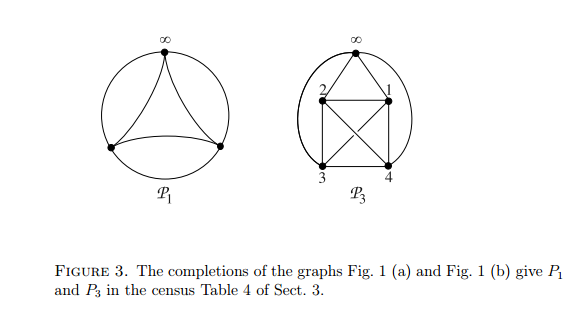} 
\end{center}


%
%
\subsection{Multiple Zeta Values}
The Riemann Zeta Function is just the ``tip of the iceberg'' of algebraic and analytic number theory.
Its generalizations are power series associated to multiplicative characters of other number systems: $Z/nZ, F_p$ and number fields,
called Dirichlet series $DS(f)=\sum f(n) n^{-s}$.
In the special case when the arithmetic function $f:\D{N}\to \D{C}$ is a  Dirichlet character, it is called an
{\em L-function}. 

Moreover, by considering multiple series with several exponents as variables, called {\em multiple zeta functions},
one obtains values, {\em multiple zeta values} (MZV), which at special integer arguments are periods.

Here we will consider briefly the case of several independent variables.
The relation to periods comes from their integral representations, using Chen iterated integrals.

The {\em multiple zeta functions} are nested series, generalizing the Riemann Zeta function:
\begin{eqnarray*}
\zeta(s_1,\cdots , s_k) = \Sigma_{n_1>\cdots > n_k > 0} \frac{1}{n_1^{s_1}\cdots n_k^{s_k}} = \Sigma_{n_1>\cdots > n_k > 0} \Pi_{i=1}^k \frac{1}{n_i^{si}}.
\end{eqnarray*}
The (multiple) series converges when $Re(s_1)+\cdots + Re(s_i)>i$ for all $i$. 
When $s_1,\cdots, s_k$ are all positive integers (with $s_1 > 1$),
these MZV are also called {\em Euler sums}. 

One of its most important properties is {\em Euler Product Formula}:
\begin{equation}
\zeta(s) = \Pi_p \frac{1}{1-p^{-s}}^{-1}
\end{equation}

As noted before, using iterated Chen integrals one can represent MZV values at integer arguments as periods.
For example, see the Euler's identity for the zeta value $\zeta(2)=\pi^2/6$.

\subsection{The First Amazing Connection!}
What is amazing, and yet not well understood, is that Feynman integrals from physics, 
which represent actual ``real'' experimental measurements, yield rational linear combinations
of such MZVs \cite{QuantaMagazine}, 
which belong to the ``purest'' part of mathematics: Number Theory!

For the reader interested in further connections with abstract algebra,
we mention that this connection is actively being studied from various points of view,
including its relations with Quantum Groups as deformations of Hopf algebras,
Drinfeld associator and KZ-equations \cite{MZV-book}.


\section{Veneziano Amplitudes and Jacobi Sums}
Periods so far were considered in characteristic zero,
as integrals over $R,C$ (continuum/``prime at infinity'' completion of rationals) or as series (discrete), over the integers.
And interestingly enough, the Math-side (MZVs) matched the Physics side, 
represented by Feynman integrals.
 
But what about the finite characteristic!?
One recurring theme in the study of Algebraic Number Theory and Algebraic Geometry 
is the relation between characteristic zero (number fields) and finite characteristic (finite fields and p-adic numbers).

At this point let us note that L-functions can be written as infinite products called Euler products.
For example the ``global'' zeta function can be written as a product
of ``local'' factors, one for each prime number $p$:
$$\zeta(s)=\prod_{p\ prime} \frac1{1-p^{-s}}.$$
The special values yielding periods, for example $\zeta(2)=\pi^2/6$, provide corresponding factorizations of periods over prime numbers.

According to a general principle ({\em Local-to-global Principle}),
the local factors provide information about the global case,
of number fields\footnote{One should include the factor corresponding to the ``prime at infinity''.}.
The ``local'' factors should have an interpretation as an ``integral'' in the corresponding characteristic. One future direction of development of the theory of periods is to investigate these local factors.

\subsection{Veneziano Amplitudes}
On the Physics side, the ``2D'' version of Feynman integrals are the Veneziano amplitudes, which have the following form:
\begin{eqnarray*}
\frac{\Gamma(\alpha) \Gamma(\beta)}{\Gamma(\alpha+\beta)}
\end{eqnarray*}
Here $\alpha$ and $\beta$ are related to the momentum of the particles involved in the scattering process, and $\Gamma$ is the gamma function. 

\subsection{Jacobi sums}
On the Math-side, Jacobi Sums can be viewed formally as a ``local'' analog of  the above Veneziano amplitude:
\begin{equation*}
  J(c,c')= \frac{g(c)g(c')}{g(c+c')}
\end{equation*}
where $c$ and $c'$ is non-trivial Dirichlet characters of the finite field $F_p=Z/pZ$ and $g(x)$ is the Gauss sum \cite{Rosen}.

It is well known that the Gamma function is in fact the ``prime at infinity'' $p=\infty$ factor of the {\em completed zeta function},
with corresponding number system $R$, the completion of $Q$ at $p$.
All these number systems are related at the level of {\em adeles}, as a duality for the rationals $Q$.
It is therefore intriguing to delve into a study of the above mentioned relationship, in order to gain a better understanding of both sides.
We will not get into details in this article, and just mention that Jacobi sums are 2-cocyles of Gauss sums, which are the finite fields analogs of Euler's Gamma function.

It is further intriguing, as an example, that the Weil zeros of the Hasse-Weil zeta function of an elliptic curve are 
essentially Jacobi sums \cite{LI:Mat410}.
They also satisfy the (local) Riemann Hypothesis $w\bar{w}=p$, being primes of a certain field extension of the rationals, 
splitting the rational prime $p$.
It is natural to expect that such Weil zeros can be interpreted in a natural way as periods in a geometric sense, of the corresponding elliptic curve.

These zeta functions, the congruence zeta function in finite characteristic and Riemann zeta function, formally correspond,
and the later plays the role of a partition function of the so called {\em Riemann gas}.
There are even more intriguing interpretations relating these topics with statistical mechanics 
(\cite{Julia}, and references therein).
This hints to a whole uncovered territory of understanding Physics in terms of Number Theory \cite{LI:Rem-PNT}.
The geometric and cohomological interpretation of the local factors will be addressed in a future article.

\subsection{A Second Amazing Connection?}
Since Veneziano amplitudes in String Theory are ``2D-versions'' of Feynman integrals, 
which are quite often linear combinations of MZVs (generalized Riemann Zeta Functions), it is natural to ponder:
is there a ``Second Amazing Connection'' , relating them to L-functions?

Now L-functions have Euler products, and via the local-to-global Principle, they have p-factors $L_p$,
which are generating functions for the number of points of elliptic curves in finite characteristic.
Then in what way, if any, are these local factors related to the local amplitudes represented by Jacobi sums?
Answering such questions is the next step of author's research program, 
investigating a discrete version of String Theory \cite{LI:DST}.

\section{Conclusions}
Periods are a new class of numbers obtained as algebraic integrals, i.e. integrals of rational/algebraic functions over 
rational/algebraic domains. 
A rich source of such periods is quantum physics, with its Feynman integrals representing amplitudes of probability of scattering experiments. 
Quite surprisingly, these numbers (amplitudes) turn out to be linear combinations of periods occurring in Number Theory, 
namely zeta and multiple zeta values, together with their generalizations (L-functions) \cite{QuantaMagazine}.

Since the prototypical framework exhibiting periods, e.g. Feynman diagrams, MZV and Chen integrals, Kontsevich integral (universal invariant), Kontsevich deformation quantization, just to name a few, involves what seems to be a homotopical de Rham theory 
(one of the interpretations of Chen Integrals), the connection between knots and primes will be instrumental \cite{Knots_and_Primes}.

Currently there is a surge of interest in understanding their structure, towards a theory of periods, organizing the topic in a similar way Class Field Theory organized the topic of algebraic numbers approximately a century ago. 
One future direction of study of periods consists in using the local-to-global principle, 
and study their local cohomological interpretation.

According to a general principle ({\em Local-to-global Principle}),
the local factors provide information about the global case,
of number fields.\footnote {One should include the factor corresponding to the ``prime at infinity''.}.
The ``local'' factors should have an interpretation as an ``integral'' in the corresponding characteristic. 
One future direction of development of the theory of periods is to investigate these local factors.

Note that Veneziano amplitude in String Theory can be seen as the 'local' representative for Feynman Integral, 
and that Jacobi sums are a discrete analog of Veneziano amplitudes.
On the other hand, it is not clear how Feynman amplitudes, which are rational linear combinations of MZVs,
relate to the later; it is natural to hope that the local-to-global principle mentioned above,
relating the ``continuum'' (completion of $Q$), and the discrete ($Z$ and p-adic completions), 
is a starting point for a study aiming to understand this ``unexpected / unexplained'' efficiency of Mathematics in Physics.

\section{Acknowledgements}
The first author would like to thank for the opportunity to advise the research project of the second author  on such an
interesting and modern topic, and to thank Professor Maxim Lvovich Kontsevich and Professor Yuri Ivanovich Manin for 
implicit or direct encouragements to {\em start} pursuing such a difficult subject.


\end{document}